\DeclareMathAlphabet{\mathpzc}{OT1}{pzc}{m}{it}
\renewcommand{\sfdefault}{iwona}
\DeclareMathAlphabet{\mathbfsf}{\encodingdefault}{\sfdefault}{bx}{n}
\newcommand{\define}[1]{\textbf{#1}}
\def\bign#1{\mathclose{\hbox{$\left#1\vbox to8.5\p@{}\right.\n@space$}}\mathopen{}}
\def\Bign#1{\mathclose{\hbox{$\left#1\vbox to11.5\p@{}\right.\n@space$}}\mathopen{}}
\def\biggn#1{\mathclose{\hbox{$\left#1\vbox to14.5\p@{}\right.\n@space$}}\mathopen{}}
\def\Biggn#1{\mathclose{\hbox{$\left#1\vbox to17.5\p@{}\right.\n@space$}}\mathopen{}}
\newcommand{\Y}				{{\mathbb Y}}
\newcommand{\Xspace}        {{\mathbb X}}
\newcommand{\Rspace}        	{{\mathbb R}}
\newcommand{\Vect}          	{{\mathsf{Vec}}}
\newcommand{\Ent}          		{{\mathsf{Ent}}}
\newcommand{\Bfunc}          	{{\mathsf{B}}}
\newcommand{\Cfunc}          	{{\mathsf{C}}}
\newcommand{\Ffunc}          	{{\mathsf{F}}}
\newcommand{\Hfunc}          	{{\mathcal{H}}}
\newcommand{\Lfunc}          	{{\mathsf{L}}}
\newcommand{\image}		{\mathrm{im}}
\newcommand{\X}			{\mathsf{X}}
\newcommand{\B}			{\mathsf{B}}
\newcommand{\Z}			{\mathsf{Z}}
\newcommand{\E}			{\mathsf{E}}
\newcommand{\D}			{\mathsf{D}}
\newcommand{\field}			{\mathsf{k}}
\newtheoremstyle{amit}
{7pt}
{7pt}
{}
{7pt}
{\bf}
{:}
{.5em}
{}
\theoremstyle{amit}
\newtheorem{defn}{Definition}[section]
\newtheorem{prop}[defn]{Proposition}
\newtheorem{lem}[defn]{Lemma}
\newtheorem{thm}[defn]{Theorem}
\newtheorem{cor}[defn]{Corollary}
\title{Leray Spectral Sequence for Simplicial Maps}
\author{Amit Patel and Dustin Sauriol 
\thanks{This research was partially supported by NSF grant CCF 1717159}}
\affil{Department of Mathematics, Colorado State University}
\date{}
\begin{document}

\maketitle
 
 \begin{abstract}
The Leray spectral sequence of a map $f$ computes the homology
of the domain of $f$ from the fibers of $f$.
In this expository paper, we relate in full detail the Leray spectral sequence associated to a
simplicial map $f$ to the Leray cosheaves of $f$.
We then give applications to level set persistent homology and Reeb spaces.
 \end{abstract}

\section{Introduction}

In homological algebra, a spectral sequence is a divide-and-conquer approach to
computing the homology of a space.
Divide the space into subspaces and calculate the homology of each subspace in parallel.
Combine the subspaces in a hierarchical fashion updating the homology at each level.
Assuming a finite number of subspaces,
this process terminates with the homology of the original space.

Spectral sequences were originally introduced by Jean Leray in conjunction with sheaves
in the early 1940s as a prisoner of war in Austria \cite{andler2006jean, dieudonne}.
Leray was interested cohomology, but we are interested
in homology so we will work with cosheaves not sheaves.
The rough idea is the following.
Let $f : \Xspace \to \Y$ be a continuous function.
For every point $y \in \Y$, record the $q$-th homology of the fiber $f^{-1}(y)$.
The resulting object is a cosheaf $\Lfunc^f_q$ over $\Y$.
Now sweep these fiber-wise $q$-dimensional homology classes over $p$-dimensional
subspaces of $\Y$ to produce a cosheaf homology group $\Hfunc_p(\Y; \Lfunc^f_q)$.
There is one such group for every pair of dimensions $p$ and $q$.
One may think, naively, that every element of $\Hfunc_p(\Y; \Lfunc^f_q)$
corresponds to an element of $\Hfunc_{p+q}(\Xspace)$.
In fact, cosheaf homology appears on the second level of the Leray spectral sequence
associated to $f$ and there may be many more levels to climb before the total homology
of $\Xspace$ is reached.

Because of their parallel nature, spectral sequences are of interest to computational topologists.
In 2010, Edelsbrunner and Harer  \cite{comp_top} outlined a spectral sequence to compute the persistent
homology of a filtration which was later implemented in \cite{Bauer}.
The idea here is to divide the boundary matrix associated to the filtration into blocks
which can be processed in parallel and then combined in a hierarchical fashion.
Given a filtered space, can the filtration be divided into subfiltrations over which persistence can be
computed in parallel and then combined in a hierarchical fashion to produce a persistence diagram
for the total filtration?
An early attempt to answer this question can be found in \cite{lipsky2011spectral};
see also \cite{Lewis}.
Basu and Parida relate persistent homology groups to spectral sequences \cite{BASU2017119}.
Govc and Skraba used a spectral sequence in developing a notion of an approximate Nerve theorem \cite{Govc:2018:ANT:3282232.3282249}.

In this paper, we describe in full detail the Leray spectral sequence associated to a 
simplicial map $f : \Xspace \to \Y$.
We then relate the Leray spectral sequence associated to $f$ to the Leray cosheaves of $f$.
When $\Y$ is a triangulation of the real line, the Leray cosheaves of $f$ are
well studied by computational topologists under the guise level-set persistent homology \cite{level_set_zigzag}.
We conclude by relating the Leray spectral sequence to Reeb spaces \cite{Reeb_spaces, patel_thesis}.

Nothing in this paper is new.
The Leray spectral sequence is now a special case of the Grothendieck spectral
sequence which can be found in most textbooks on homological algebra; see for example \cite{weibel}.
The conclusions we draw about level set persistent homology (Corollary \ref{cor:levelset})
are well known although proved in a different language \cite{bendich2013, level_set_zigzag}.
The conclusions we draw about Reeb spaces (Corollary \ref{cor:Reeb}) have also been observed \cite{basu2018reeb, dey}.
The connection between level-set persistent homology and cosheaves was made explicit by Curry \cite{curry} 
which contains a statement similar to our Corollary \ref{cor:levelset} but without a proof.

The task of learning spectral sequences and its connection to sheaves
can be daunting especially in the modern language of derived categories.
Here we choose to develop the Leray spectral sequence in the combinatorial
setting of a simplicial map.
We require the reader has a working knowledge of basic linear algebra and simplicial homology. 
We hope this exposition will be useful to those interested in developing
algorithms based on spectral sequences.

\section{Leray Spectral Sequence}

We now define the Leray spectral sequence associated to a simplicial map
$f : \Xspace \to \Y$.
Throughout this paper, a simplex is always open.

\begin{defn}
\label{defn:spectral}
A \define{spectral sequence} is a collection of three sequences $(\E^r, d^r, \xi^r)_{r\geq 0}$:
\begin{enumerate}
    \item A $\field$-vector space $\E^r$, often called a sheet or page.
    \item A $\field$-linear endomorphism $d^r: \E^r \rightarrow \E^r$ such that $d^r \circ d^r =0$ called a boundary map.
    \item An isomorphism $\xi^{r}: \Hfunc(\E^r) := \dfrac{\ker d^r}{\image\; d^r} \to \E^{r+1}$.
    \end{enumerate}
The spectral sequence is said to \define{converge} if there is some integer $k$ such that $d^r = 0$ 
for all $r\geq k$.
The spectral sequence \define{converges on page $k$} if $d^r = 0$ for all $r \geq k$ and $d^{r-1} \neq 0$.
Note that if $(\E^r, d^r, \xi^r)_{r \geq 0}$ converges on page $k$, then, by Condition 3, 
$\E^r \cong \E^{k}$ for all $r \geq k$.
\end{defn}


We start by setting up the necessary filtrations followed by the construction of the Leray spectral.
We conclude this section with a theorem on convergence and a proposition that will be useful in the next section.

\subsection{Filtrations}

Fix a simplicial map $f : \Xspace \to \Y$ and a field $\field$.
The two complexes $\Xspace$ and $\Y$ need not be finite nor of bounded dimensions.
Consider the simplicial chain complex associated to $\Xspace$:
	\begin{equation}
	\label{eq:chain_complex}
	\begin{tikzcd}
	\cdots \arrow[r] & \Cfunc_{d+1} (\Xspace) \arrow[r, "\partial_{d+1}"] & \Cfunc_{d}(\Xspace)
	\arrow[r, "\partial_d"] & \Cfunc_{d-1}(\Xspace) \arrow[r] & \cdots.
	\end{tikzcd}
	\end{equation}
For any integer $d$, the chain group $\Cfunc_d(\Xspace)$ is the $\field$-vector space generated 
by the set of oriented $d$-simplices in $\Xspace$.
Since $\Xspace$ may be infinite, we require that every element of $\Cfunc_d(\Xspace)$
be non-zero over finitely many $d$-simplices.
In other words, $\Cfunc_d(\Xspace) := \bigoplus_{\sigma \in \Xspace : \dim \sigma = d} \field$.
The boundary homomorphism $\partial$ is induced by the 
operation that takes each oriented $d$-simplex $[u_0, \ldots, u_d ]$ to the signed sum
	\begin{equation*}
	\partial [ u_0, \cdots, u_d] := 
	\sum_{i=0}^{d} (-1)^i [ u_0, \cdots, \hat{u}_i, \cdots, u _d ]
	\end{equation*}
where the hat over $\sigma_i$ indicates its deletion.

The $j$-th-skeleton of $\Y$, denoted $\Y^j$, is the subcomplex of $\Y$ consisting of
all $i$-simplicies such that $i \leq j$.
The filtration of $\Y$ by its skeleta
$\Y^0 \subseteq \Y^1 \subseteq \cdots$
lifts to a filtration
$\Xspace_0 \subseteq \Xspace_1 \subseteq \cdots$
of $\Xspace$ where
$\Xspace_{j}:=\big \{ \sigma \in \Xspace :  f(\sigma) \in \Y^j \big \}$.
For any pair of integers $p$ and $q$, let $\Xspace_{p, q}$ be the set of $(p+q)$-simplices of 
$\Xspace$ lying in $\Xspace_{p}$.
Let $\X_{p, q} \subseteq \Cfunc_{p+q} ( \Xspace )$ be the subspace generated by the set $\Xspace_{p, q}$.
Consider the following diagram:
 \[
  \begin{tikzcd}
  & \mathsf{X}_{p,q} \arrow[r, dashrightarrow, "\partial"] \arrow[hookrightarrow]{d}{} &  \mathsf{X}_{p,q-1}\arrow[hookrightarrow]{d}
  {} & \\
    \cdots \arrow{r}{\partial}& \Cfunc_{p+q}(\Xspace )\arrow{r} {\partial} & \Cfunc_{p+q-1}(\Xspace)
    \arrow{r}{\partial} & \cdots.
  \end{tikzcd}
\]
Every face of a simplex in $\Xspace_{p,q}$ lies in $\Xspace_{p, q-1}$.
As a consequence, the boundary map $\partial : \Cfunc_{p+q} (\Xspace) \to \Cfunc_{p+q-1} (\Xspace)$
restricts to a boundary map
$\partial : \X_{p,q} \to \X_{p,q-1}$.
Consider the following commutative diagram where $r$ is any integer:
\begin{equation}
\label{eq:double_chain}
\begin{tikzcd}
\cdots \arrow[r] & \X_{p-r, q+r+1} \arrow[d, hook] \arrow[r, "\partial"] & 
\X_{p-r, q+r} \arrow[d, hook] \arrow[r, "\partial"]
& \X_{p-r, q+r-1} \arrow[r] \arrow[d, hook] & \cdots  \\
\cdots \arrow[r] & \X_{p,q+1} \arrow[r, "\partial"] \arrow[d, hook] & \X_{p,q} \arrow[r, "\partial"] 
\arrow[d, hook] & \X_{p,q-1} \arrow[r] \arrow[d, hook] & \cdots \\
\cdots \arrow[r] & \X_{p+r, q-r+1} \arrow[r, "\partial"] \arrow[d, hook] & \X_{p+r,q-r} \arrow[r, "\partial"] 
\arrow[d, hook] & 
\X_{p+r, q-r-1} \arrow[r] \arrow[d, hook] & \cdots \\
\cdots \arrow[r] & \Cfunc_{p+q+1}(\Xspace) \arrow[r, "\partial"] & \Cfunc_{p+q}(\Xspace) \arrow[r, "\partial"] 
& \Cfunc_{p+q-1}(\Xspace) \arrow[r] & \cdots  .
\end{tikzcd}
\end{equation}
Let $\B^r_{p,q} \subseteq \X_{p,q} \subseteq \Cfunc_{p+q}(\Xspace)$ be the subspace
$\B^r_{p,q}:=\mathsf{X}_{p,q} \cap \partial\left(\mathsf{X}_{p+r,q-r+1}\right)$
and let $\Z^r_{p,q}\subseteq \mathsf{X}_{p,q} \subseteq \Cfunc_{p+q}(\Xspace)$ be the subspace
$\Z^r_{p,q}:= \X_{p,q} \cap \partial^{-1}\left( \X_{p-r,q+r-1} \right)$.
We now state a series of elementary observations about $\B^r_{p,q}$ and $\Z^r_{p,q}$ that
can be easily verified by chasing elements in Diagram \ref{eq:double_chain}:
	\begin{align}
	\B^r_{p,q} &\subseteq \B^{r+1}_{p,q}& \Z^{r+1}_{p,q} &\subseteq \Z^r_{p,q} \label{ob:one} \\
	\Z^r_{p,q} &\subseteq \Z^{r+1}_{p+1,q-1}& \B^r_{p,q} &\subseteq \B^{r-1}_{p+1,q-1} \label{ob:two} \\
	\B^r_{p,q} &\subseteq \Z^r_{p,q} \label{ob:three}
	\end{align}



\subsection{Spectral Sequence}

We start with the pages of the Leray spectral sequence associated to $f : \Xspace \to \Y$.
For all $r \geq 0$, let
\[\E^r_{p,q} := \dfrac{\Z^r_{p,q}}{\Z^{r-1}_{p-1,q+1}+\B^{r-1}_{p,q}}.\]
Here the $+$ stands for the internal sum of the two subspaces in $\Cfunc_{p+q}(\Xspace)$.
Define the $r$-th page of the spectral sequence $(\E^r, d^r, \xi^r)_{r \geq 0}$ as
$\E^r := \bigoplus_{\forall p,q} \E^r_{p,q}$.

We now begin the construction of the endomorphisms $d^r : \E^r \to \E^r$
by constructing a boundary map 
$d^r_{p,q} : \E^r_{p,q} \to \E^r_{p-r, q+r-1}$ for all $p$ and $q$.

\begin{lem} \label{lem:A}
$\partial \big( \Z^r_{p,q} ) = B^r_{p-r,q+r-1}$.
\end{lem}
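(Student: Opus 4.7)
The plan is to verify the equality by unpacking the definitions of $\B^r$ and $\Z^r$ and showing each containment directly. The key observation is that $\B^r_{p-r,q+r-1}$, by definition, equals $\X_{p-r,q+r-1} \cap \partial(\X_{p-r+r, q+r-1-r+1}) = \X_{p-r,q+r-1} \cap \partial(\X_{p,q})$, so we really want to show that $\partial(\Z^r_{p,q}) = \X_{p-r,q+r-1} \cap \partial(\X_{p,q})$.

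For the inclusion $\partial(\Z^r_{p,q}) \subseteq \B^r_{p-r,q+r-1}$, I would take any $z \in \Z^r_{p,q}$. By definition $z \in \X_{p,q}$ and $\partial z \in \X_{p-r, q+r-1}$, so $\partial z$ lies in both $\partial(\X_{p,q})$ and $\X_{p-r, q+r-1}$, hence in $\B^r_{p-r,q+r-1}$.

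For the reverse inclusion, I would start with $b \in \B^r_{p-r,q+r-1}$. By the definition above, $b = \partial x$ for some $x \in \X_{p,q}$, and $b \in \X_{p-r,q+r-1}$. Thus $x \in \X_{p,q}$ and $\partial x \in \X_{p-r, q+r-1}$, which means $x \in \X_{p,q} \cap \partial^{-1}(\X_{p-r, q+r-1}) = \Z^r_{p,q}$. Therefore $b = \partial x \in \partial(\Z^r_{p,q})$.

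There is no real obstacle here: the lemma is a pure definition-chase in Diagram \ref{eq:double_chain}. The only thing to be careful about is the index bookkeeping, namely checking that $(p-r) + r = p$ and $(q+r-1) - r + 1 = q$ so that the second index in $\partial(\X_{\cdot, \cdot})$ inside the definition of $\B^r_{p-r,q+r-1}$ matches $\X_{p,q}$ as needed.
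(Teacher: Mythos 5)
Your proof is correct and follows the same route as the paper: the paper's one-line ``distribute $\partial$'' step is exactly the identity $\partial\bigl(\X_{p,q}\cap\partial^{-1}(\X_{p-r,q+r-1})\bigr)=\partial(\X_{p,q})\cap\X_{p-r,q+r-1}$, which you verify by the two element-wise inclusions. Your index bookkeeping for $\B^r_{p-r,q+r-1}$ is also right, so nothing is missing.
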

\begin{proof}
\begin{align*}
\partial \left (Z^r_{p,q}\right) &=\partial \left(\mathsf{X}_{p,q}\cap \partial^{-1}\left(\mathsf{X}
_{p-r,q+r-1}\right)\right) && \text{by deifinition} \\
&= \mathsf{X}_{p-r,q+r-1}\cap \partial \left(\mathsf{X}_{p,q}\right) && \text{distribute $\partial$} \\
&= B^r_{p-r,q+r-1} && \text{by definition}.
\end{align*}
\end{proof}

\begin{lem} \label{lem:B}
$\partial ( \Z^{r-1}_{p-1, q+1} + \B^{r-1}_{p,q} ) = \B^{r-1}_{p-r,q+r-1}$.
\end{lem}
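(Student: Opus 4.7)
The plan is to reduce this to Lemma \ref{lem:A} plus the fact that boundaries of boundaries vanish. First I would use linearity of $\partial$ to split the left-hand side as
\[
\partial\bigl(\Z^{r-1}_{p-1,q+1} + \B^{r-1}_{p,q}\bigr) \;=\; \partial\bigl(\Z^{r-1}_{p-1,q+1}\bigr) \;+\; \partial\bigl(\B^{r-1}_{p,q}\bigr),
\]
where $+$ on the right is still the internal sum of subspaces of $\Cfunc_{p+q-1}(\Xspace)$.

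Next I would handle the two summands separately. For the $\Z$-term, I would simply reindex Lemma \ref{lem:A}: replacing $r$ by $r-1$ and $(p,q)$ by $(p-1,q+1)$ gives
\[
\partial\bigl(\Z^{r-1}_{p-1,q+1}\bigr) \;=\; \B^{r-1}_{(p-1)-(r-1),\,(q+1)+(r-1)-1} \;=\; \B^{r-1}_{p-r,\,q+r-1},
\]
which is exactly the target subspace. For the $\B$-term, I would recall the definition $\B^{r-1}_{p,q} = \X_{p,q}\cap\partial(\X_{p+r-1,q-r+2})$, so every element of $\B^{r-1}_{p,q}$ is itself a boundary; then $\partial^2 = 0$ forces $\partial(\B^{r-1}_{p,q}) = 0$.

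Combining, the right-hand side collapses to $\B^{r-1}_{p-r,q+r-1} + 0 = \B^{r-1}_{p-r,q+r-1}$, as desired. There is no real obstacle here; the statement is essentially Lemma \ref{lem:A} plus an observation that adjoining the $\partial$-closed subspace $\B^{r-1}_{p,q}$ contributes nothing to the image. The only point to be careful about is the bookkeeping of the shifted indices in the application of Lemma \ref{lem:A}, which should be spelled out explicitly to keep the reader oriented.
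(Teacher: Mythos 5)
Your proposal is correct and follows essentially the same route as the paper's proof: distribute $\partial$ over the internal sum, apply Lemma \ref{lem:A} with the shifted indices $(r-1, p-1, q+1)$ to identify $\partial(\Z^{r-1}_{p-1,q+1})$ with $\B^{r-1}_{p-r,q+r-1}$, and kill $\partial(\B^{r-1}_{p,q})$ via $\partial\circ\partial = 0$. The index bookkeeping you spell out is exactly right.
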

\begin{proof}
\begin{align*} 
\partial \left(\Z^{r-1}_{p-1,q+1}+\B^{r-1}_{p,q}\right) &= \partial \left (\Z^{r-1}_{p-1,q+1}\right) + 
\partial \left(\B^{r-1}_{p,q}\right) && \text{distribute $\partial$}\\
& = \B^{r-1}_{p-r,q+r-1} + 
\partial \left(\B^{r-1}_{p,q}\right) && \text{by Lemma \ref{lem:A}} \\
& = \B^{r-1}_{p-r,q+r-1} && \text{by $\partial \circ \partial = 0$.}
\end{align*}
\end{proof}

By Lemmas \ref{lem:A} and \ref{lem:B} and the inclusions 
$\B^r_{p-r,q+r-1} \subseteq \Z^r_{p-r, q+r-1}$ (Observation \ref{ob:three}) and
$\B^{r-1}_{p-r,q+r-1} \subseteq \Z^{r-1}_{p-r-1,q+r} + \B^{r-1}_{p-r,q+r-1}$,
the boundary map
$\partial : \Cfunc_{p+q}(\Xspace) \to \Cfunc_{p+q-1}(\Xspace)$
restricts to a boundary map $d^r_{p,q} : \E^r_{p,q}\rightarrow \E^r_{p-r,q+r-1}$.
Define the endomorphism $d^r : \E^r \to \E^r$ as $d^r := \bigoplus_{\forall p,q} d^r_{p,q}$.
We have $d^r \circ d^r = 0$ because $d^r_{p-r,q+r-1} \circ d^r_{p,q} = 0$.
See Figures \ref{fig:page_zero}, \ref{fig:page_one}, and \ref{fig:page_two} for an illustration
of the first three pages.

Now we begin the task of constructing the isomorphisms $\xi^r$.

\begin{lem}
\label{lem:kernel}
$\ker d^r_{p,q} = \dfrac{\Z^{r+1}_{p,q}}{\Z^{r-1}_{p-1,q+1}+\B^{r-1}_{p,q}}$.
\end{lem}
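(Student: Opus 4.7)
The plan is to pass through the defining quotient structure. A class $[x]\in \E^r_{p,q}$ is represented by some $x\in \Z^r_{p,q}$, and by construction $d^r_{p,q}([x]) = [\partial x]$ in $\E^r_{p-r,q+r-1}$. So I will reduce the problem to the pointwise criterion
\[
[x]\in \ker d^r_{p,q} \iff \partial x \in \Z^{r-1}_{p-r-1,q+r}+\B^{r-1}_{p-r,q+r-1},
\]
and then translate this back into a statement about representatives in the source.

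For the easy containment ($\supseteq$), I would start with any $x \in \Z^{r+1}_{p,q}$, so that $\partial x \in \X_{p-r-1,q+r}$. Applying $\partial$ once more gives $0$, which lies vacuously in $\X_{p-r,q+r-1}$, so $\partial x \in \Z^{r-1}_{p-r-1,q+r}$. Hence $[\partial x]=0$ in $\E^r_{p-r,q+r-1}$ and $[x]\in\ker d^r_{p,q}$.

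The interesting containment ($\subseteq$) is where the real argument lives. Given $x\in \Z^r_{p,q}$ with $[x]\in\ker d^r_{p,q}$, decompose $\partial x = z+b$ with $z\in\Z^{r-1}_{p-r-1,q+r}$ and $b\in\B^{r-1}_{p-r,q+r-1}$. By the definition of $\B^{r-1}_{p-r,q+r-1}$ (chasing indices through Diagram \ref{eq:double_chain}), there is a ``primitive'' $c\in\X_{p-1,q+1}$ with $\partial c = b$. Since $\partial c \in \X_{p-r,q+r-1}$, this $c$ in fact lies in $\Z^{r-1}_{p-1,q+1}$. Now form $x' := x-c$: it still lies in $\X_{p,q}$ (using $\X_{p-1}\subseteq\X_p$), and its boundary $\partial x' = z$ lies in $\X_{p-r-1,q+r}$, so $x'\in\Z^{r+1}_{p,q}$. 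Thus $x \equiv x' \pmod{\Z^{r-1}_{p-1,q+1}+\B^{r-1}_{p,q}}$ with $x'$ in $\Z^{r+1}_{p,q}$, identifying $\ker d^r_{p,q}$ with the image of $\Z^{r+1}_{p,q}$ in $\E^r_{p,q}$, which is the claimed quotient.

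The main obstacle is bookkeeping: the formula mixes six different $\Z$- and $\B$-spaces with closely related indices, and it is easy to get the shifts wrong. The key conceptual move is recognizing that the definition of $\B^{r-1}$ in the target page is exactly strong enough to produce a primitive $c$ whose source-side membership in $\Z^{r-1}_{p-1,q+1}$ is already built in via $\partial c = b \in \X_{p-r,q+r-1}$; this is the only nontrivial index check, and it is what drives the reverse direction.
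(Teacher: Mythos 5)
Your proof is correct, but it takes a genuinely different route from the paper's. The paper computes $\ker d^r_{p,q}$ entirely at the level of subspaces of $\Cfunc_{p+q}(\Xspace)$: it writes the kernel's numerator as $\Z^r_{p,q}\cap\partial^{-1}\bigl(\Z^{r-1}_{p-r-1,q+r}+\B^{r-1}_{p-r,q+r-1}\bigr)$, distributes $\partial^{-1}$ and $\cap$ over the sum, identifies the two resulting terms with $\Z^{r+1}_{p,q}$ and $\Z^r_{p,q}\cap\Z^{r-1}_{p-1,q+1}$, and absorbs the second. You instead chase elements: you show the image of $\Z^{r+1}_{p,q}$ in $\E^r_{p,q}$ lies in the kernel, and conversely, given $\partial x = z+b$, you lift $b$ to a primitive $c\in\Z^{r-1}_{p-1,q+1}$ and replace $x$ by $x-c\in\Z^{r+1}_{p,q}$ without changing its class. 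Your index checks (that $c\in\Z^{r-1}_{p-1,q+1}$ and $x-c\in\Z^{r+1}_{p,q}$) are all correct, and your argument is essentially the element-wise justification of the paper's formal distribution step. It even has an advantage: the paper's final line asserts $\Z^{r+1}_{p,q}+\Z^{r-1}_{p-1,q+1}=\Z^{r+1}_{p,q}$ as subspaces, which is not literally true (Observation \ref{ob:two} only yields $\Z^{r-1}_{p-1,q+1}\subseteq\Z^{r}_{p,q}$, not $\subseteq\Z^{r+1}_{p,q}$); the equality, and hence the lemma itself, must be read modulo the denominator $\Z^{r-1}_{p-1,q+1}+\B^{r-1}_{p,q}$, i.e.\ the right-hand side is the image of $\Z^{r+1}_{p,q}$ in $\E^r_{p,q}$. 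Your correction-by-$c$ step performs exactly that reduction explicitly, so you land on the correct statement without the abuse of notation. One cosmetic slip: in the easy containment you check that $\partial\partial x=0$ lies in $\X_{p-r,q+r-1}$, whereas membership of $\partial x$ in $\Z^{r-1}_{p-r-1,q+r}$ requires its boundary to lie in $\X_{p-2r,q+2r-2}$; since the element in question is $0$ this is harmless.
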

\begin{proof}
The kernel is 
\begin{align*}
\ker d^r_{p,q} &= \dfrac{\Z^r_{p,q} \cap \partial^{-1} \big( \Z^{r-1}_{p-r-1,q+r} + \B^{r-1}_{p-r,q+r-1} \big)}
{\Z^{r-1}_{p-1,q+1}+\B^{r-1}_{p,q}} \\
&= 
\dfrac{\Z^r_{p,q} \cap \partial^{-1} ( \Z^{r-1}_{p-r-1,q+r}) + \Z^r_{p,q} \cap \partial^{-1}(\B^{r-1}_{p-r,q+r-1})}
{\Z^{r-1}_{p-1,q+1}+\B^{r-1}_{p,q}}.
\end{align*}
We now work on numerator:
	\begin{align*}
	\Z^r_{p,q} \cap \partial^{-1} ( \Z^{r-1}_{p-r-1,q+r}) &= 
	\Z^r_{p,q} \cap \partial^{-1} \big( \X_{p-r-1,q+r} \cap \partial^{-1} ( \X_{p-2r, q+2r-2} ) \big) && \text{by definition} \\
	&= \Z^r_{p,q} \cap \big( \partial^{-1}(\X_{p-r-1, q+r}) \cap \partial^{-1} \partial^{-1} (\X_{p-2r, q+2r-2}) \big) 
	&& \text{distribute $\partial^{-1}$} \\
	&= \Z^r_{p,q} \cap \big( \partial^{-1}(\X_{p-r-1, q+r}) \cap \X_{p-2r,q+2r} \big) \\
	&= \Z^r_{p,q} \cap \partial^{-1}(\X_{p-r-1, q+r}) \\
	\Z^r_{p,q} \cap \partial^{-1}(\B^{r-1}_{p-r,q+r-1}) &= 
	\Z^r_{p,q} \cap \partial^{-1} \big( \X_{p-r,q+r-1} \cap \partial ( \X_{p-1, q+1} ) \big) && \text{by definition} \\
	&= \Z^r_{p,q} \cap \big( \partial^{-1}(\X_{p-r,q+r-1}) \cap \X_{p-1, q+1} \big) && \text{distribute $\partial^{-1}$} \\
	&= \Z^r_{p,q} \cap \Z^{r-1}_{p-1,q+1} && \text{by definition}.
	\end{align*}
Furthermore
\begin{align*}
\Z^r_{p,q} \cap \partial^{-1} \big( \X_{p-r-1,q+r} \big) + \Z^r_{p,q} \cap \Z^{r-1}_{p-1,q+1} &= 
\Z^{r+1}_{p,q} + \Z^r_{p,q} \cap \Z^{r-1}_{p-1,q+1} &&  \\
&= \Z^{r+1}_{p,q} + \Z^{r-1}_{p-1,q+1} && \text{Observation \ref{ob:two}} \\
&= \Z^{r+1}_{p,q} && \text{Observation \ref{ob:two}}.
\end{align*}
\end{proof}

\begin{lem}
\label{lem:image}
$\image\; d^r_{p+r,q-r+1} = \dfrac{\Z^r_{p-1,q+1} + \B^r_{p,q}}{\Z^{r-1}_{p-1,q+1}+\B^{r-1}_{p,q}}$.
\end{lem}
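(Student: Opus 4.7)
The plan is to compute $\image\; d^r_{p+r, q-r+1}$ directly from the definition of the induced boundary map, and then massage the answer into the claimed form. Write $N := \Z^{r-1}_{p-1, q+1} + \B^{r-1}_{p, q}$ for the denominator of $\E^r_{p, q}$. Since $d^r_{p+r, q-r+1}$ is induced by $\partial$, its image inside $\E^r_{p, q}$ is represented by $\partial(\Z^r_{p+r, q-r+1})$ modulo $N$.

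The main leverage is Lemma \ref{lem:A}: applying it with the index shift $p \mapsto p+r$ and $q \mapsto q - r + 1$ yields $\partial(\Z^r_{p+r, q-r+1}) = \B^r_{p, q}$. Hence $\image\; d^r_{p+r, q-r+1} = (\B^r_{p, q} + N)/N$ as a subspace of $\E^r_{p, q}$. This is the only nontrivial computation in the argument; everything else is bookkeeping against the already-recorded inclusions.

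To match the stated formula, I would observe that adjoining $\Z^r_{p-1, q+1}$ to the numerator contributes nothing in the quotient, because Observation \ref{ob:one} gives $\Z^r_{p-1, q+1} \subseteq \Z^{r-1}_{p-1, q+1} \subseteq N$. Combined with $\B^{r-1}_{p, q} \subseteq \B^r_{p, q}$ (also Observation \ref{ob:one}), a short calculation shows $\Z^r_{p-1, q+1} + \B^r_{p, q} + N = \B^r_{p, q} + N$, so the two expressions denote the same subspace of $\E^r_{p, q}$. The only subtle point to keep in mind is notational, not mathematical: the expression $\dfrac{\Z^r_{p-1, q+1} + \B^r_{p, q}}{N}$ must be read as the image of its numerator in $\E^r_{p, q}$, since $N$ is not literally contained in $\Z^r_{p-1, q+1} + \B^r_{p, q}$. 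The motivation for adopting this particular numerator, even though $\Z^r_{p-1, q+1}$ is redundant here, becomes clear once this lemma is combined with Lemma \ref{lem:kernel}: the quotient $\ker d^r_{p,q}/\image d^r_{p+r,q-r+1}$ reduces to $\Z^{r+1}_{p, q}/(\Z^r_{p-1, q+1} + \B^r_{p, q})$, which is precisely $\E^{r+1}_{p, q}$ and supplies the isomorphism $\xi^r$.
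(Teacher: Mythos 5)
Your proposal is correct and follows essentially the same route as the paper: apply Lemma \ref{lem:A} with the index shift to get $\partial(\Z^r_{p+r,q-r+1}) = \B^r_{p,q}$, then absorb the denominator into the numerator and simplify via Observation \ref{ob:one}. The only cosmetic difference is that the paper leaves its final numerator as $\B^r_{p,q} + \Z^{r-1}_{p-1,q+1}$ rather than $\B^r_{p,q} + \Z^{r}_{p-1,q+1}$, which, as you note, represents the same subspace of $\E^r_{p,q}$ since $\Z^r_{p-1,q+1} \subseteq \Z^{r-1}_{p-1,q+1}$.
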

\begin{proof}
We have
\begin{align*}
\image\; d^r_{p+r, q-r+1} &= \dfrac{\partial (\Z^r_{p+r,q-r+1})}{\Z^{r-1}_{p-1,q+1}+\B^{r-1}_{p,q}} \\
&= \dfrac{\B^r_{p,q}}{\Z^{r-1}_{p-1,q+1}+\B^{r-1}_{p,q}} && \text{by Lemma \ref{lem:A}} \\
&= \dfrac{\B^r_{p,q} + \Z^{r-1}_{p-1,q+1}+\B^{r-1}_{p,q}}{\Z^{r-1}_{p-1,q+1}+\B^{r-1}_{p,q}} \\
&= \dfrac{\B^r_{p,q} + \Z^{r-1}_{p-1,q+1}}{\Z^{r-1}_{p-1,q+1}+\B^{r-1}_{p,q}} && \text{Observation \ref{ob:one}}.
\end{align*}
\end{proof}

By Lemmas \ref{lem:kernel} and \ref{lem:image}, 
	\[\Hfunc (\E^r_{p,q}) = \frac{\ker d^r_{p,q}}{\image\; d^r_{p+r, q-r+1}} = 
	\frac{\Z^{r+1}_{p,q}}{\Z^{r}_{p-1, q+1} + \B^{r}_{p,q}} = \E^{r+1}_{p,q}.\]
Thus we have an isomorphism $\xi^r_{p,q} : \Hfunc(\E^r_{p,q}) \to \E^{r+1}_{p,q}$.
Let $\xi^r := \bigoplus_{\forall p,q} \xi^r_{p,q}$.
We are finished defining the Leray spectral sequence $(\E^r, d^r, \xi^r)_{r \geq 0}$
associated to the simplicial map $f : \Xspace \to \Y$.
\begin{figure}
\[
  \begin{tikzcd}
  & \vdots && \vdots && \vdots & \\
    \cdots \arrow{r} &\E^0_{p-1,q+1} \arrow{rr}{d^0_{p-1,q+1}}  && \E^0_{p-1,q} \arrow{rr}{d^0_{p-1,q}} && 
    \E^0_{p-1,q-1} \arrow{r} & \cdots \\
     \cdots \arrow{r} & \E^0_{p,q+1} \arrow{rr}{d^0_{p,q+1}}  && \E^0_{p,q} \arrow{rr}{d^0_{p,q}} && 
     \E^0_{p,q-1} \arrow{r} & \cdots \\
     \cdots \arrow{r} &\E^0_{p+1,q+1} \arrow{rr}{d^0_{p+1,q+1}}  && \E^0_{p+1,q} \arrow{rr}{d^0_{p+1,q}} && \E^0_{p+1,q-1} \arrow{r} & \cdots \\
     & \vdots && \vdots && \vdots &
  \end{tikzcd}
\]
\caption{A visualization of $\E^0$.}
\label{fig:page_zero}
\end{figure}
\begin{figure}
\[
  \begin{tikzcd}
  & \vdots & \vdots &\vdots  & \\
    \cdots  &\E^1_{p-1,q+1} \arrow{u}  & \E^1_{p-1,q} \arrow{u} & \E^1_{p-1,q-1} \arrow{u} & \cdots \\
    &&&& \\
     \cdots  & \E^1_{p,q+1} \arrow{uu}{d^1_{p,q+1}}  & \E^1_{p,q} \arrow{uu}{d^1_{p,q}} & 
     \E^1_{p,q-1} \arrow{uu}{d^1_{p,q-1}} & \cdots \\
     &&&& \\
     \cdots  & \E^1_{p+1,q+1} \arrow{uu}{d^1_{p+1,q+1}}  & \E^1_{p+1,q} \arrow{uu}{d^1_{p+1,q}} & 
     \E^1_{p+1,q-1} \arrow{uu}{d^1_{p+1,q-1}} & \cdots \\
     & \vdots \arrow{u} & \vdots \arrow{u} &\vdots \arrow{u} &
  \end{tikzcd}
\]
\caption{A visualization of $\E^1$.}
\label{fig:page_one}
\end{figure}
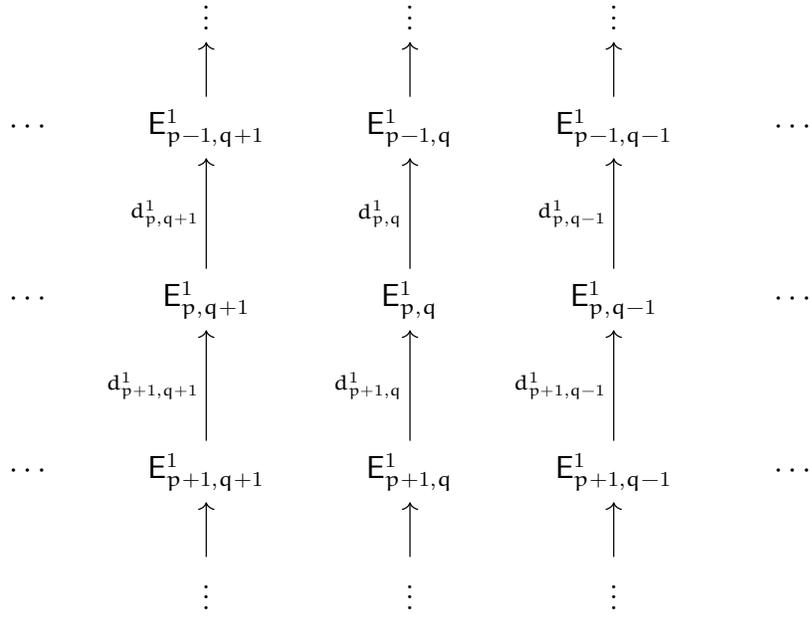

\begin{figure}
\[
  \begin{tikzcd}
 \textrm{ } & \vdots & \vdots &\vdots \\
 \cdots & \E^2_{p-2,q+1} & \E^2_{p-2,q} & \E^2_{p-2,q-1}\\
    \cdots  &\E^2_{p-1,q+1}  & \E^2_{p-1,q}  & \E^2_{p-1,q-1}  & \cdots \\
     \cdots  &\E^2_{p,q+1}   & \E^2_{p,q} \arrow{uul}{d^2} & \E^2_{p,q-1} \arrow{uul}{d^2} & \cdots \\
     \cdots  &\E^2_{p+1,q+1}   & \E^2_{p+1,q} \arrow{uul}{d^2} & \E^2_{p+1,q-1} \arrow{uul}{d^2} & \cdots \\
     \cdots &\E^2_{p+2,q+1}   & \E^2_{p+2,q} \arrow{uul}{d^2} & \E^2_{p+2,q-1} \arrow{uul}{d^2} & \cdots \\
     & \vdots  & \vdots &\vdots
  \end{tikzcd}
\]
\caption{A visualization of $\E^2$ (indices on $d^r$ are dropped for spacing reasons).}
\label{fig:page_two}
\end{figure}
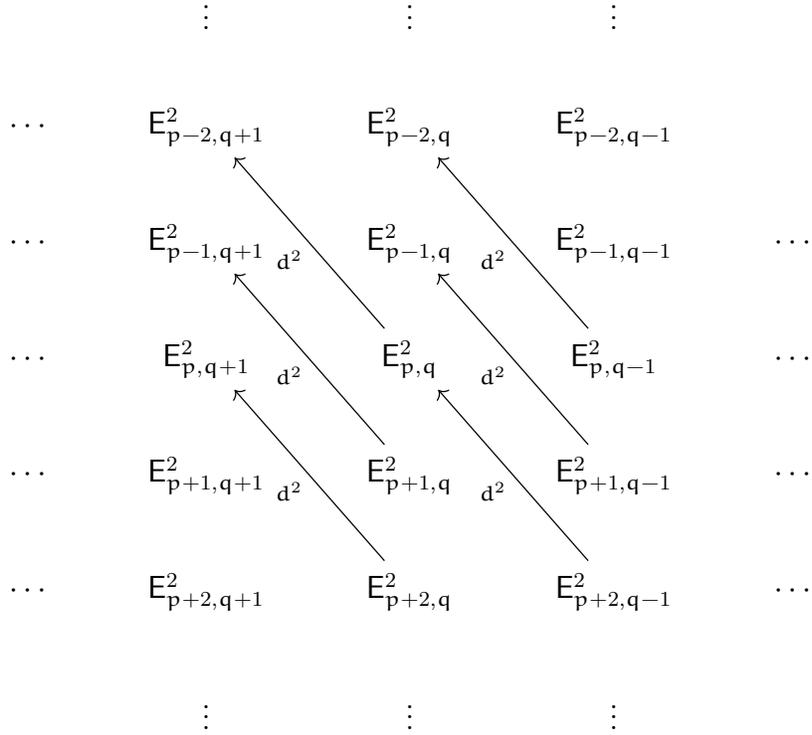

\subsection{Convergence}
If the dimension of $\Y$ is bounded, then the Leray spectral sequence of $f : \Xspace \to \Y$
converges.

\begin{prop}
\label{prop:convergence}
Let $f : \Xspace \to \Y$ be a simplicial map, $(\E^r, d^r, \xi^r)_{r \geq 0}$ its Leray spectral
sequence, and $m = \dim \Y$.
Then $(\E^r, d^r, \xi^r)_{r \geq 0}$ converges on page $r = m+1$.
\end{prop}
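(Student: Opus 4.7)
The plan is to show directly that every component $d^r_{p,q}\colon \E^r_{p,q}\to \E^r_{p-r,q+r-1}$ vanishes once $r\geq m+1$, so that $d^r=0$ on every such page. The argument should split into two cases according to whether the target bidegree $p-r$ is negative or non-negative, because in each case a different part of the filtration degenerates: either the target filtration stage is empty, or the source filtration stage has already stabilised at all of $\Y$.

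In the first case, suppose $p-r<0$. Then $\Y^{p-r}$ is empty, so $\X_{p-r,q+r-1}=0$. Plugging this into the definition $\Z^r_{p,q}=\X_{p,q}\cap \partial^{-1}(\X_{p-r,q+r-1})$ yields $\Z^r_{p,q}\subseteq \ker\partial$. Since $d^r_{p,q}$ is induced by $\partial$, this forces $d^r_{p,q}=0$. In the second case, suppose $p-r\geq 0$. Combined with $r\geq m+1$ this gives $p\geq m+1$, so both $\Y^{p}$ and $\Y^{p-1}$ already equal $\Y$. Consequently $\Xspace_p=\Xspace_{p-1}=\Xspace$, and the chain subspaces satisfy $\X_{p,q}=\X_{p-1,q+1}=\Cfunc_{p+q}(\Xspace)$. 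Substituting this equality into the definitions of $\Z^r_{p,q}$ and $\Z^{r-1}_{p-1,q+1}$ produces $\Z^r_{p,q}=\Z^{r-1}_{p-1,q+1}$, so $\E^r_{p,q}=\Z^r_{p,q}/(\Z^{r-1}_{p-1,q+1}+\B^{r-1}_{p,q})=0$, and $d^r_{p,q}$ is vacuously zero. These two cases are exhaustive, so $d^r=\bigoplus_{p,q} d^r_{p,q}=0$ for every $r\geq m+1$.

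The main obstacle I anticipate is purely bookkeeping: one has to track the indices $p$, $q$, $r$ through the nested intersections and preimages defining $\Z^r$ and $\B^{r-1}$, and verify that the stabilisation $\Y^{j}=\Y$ for $j\geq m$ feeds through the definition correctly so that $\Z^r_{p,q}$ and $\Z^{r-1}_{p-1,q+1}$ genuinely coincide in the second case. Once the case split on the sign of $p-r$ is set up, both cases reduce to a one-line observation, and no deeper structural input beyond the definition of the filtration by skeleta of $\Y$ is required.
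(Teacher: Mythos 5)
Your proof is correct and follows essentially the same two-case degeneration argument as the paper: one case where the target filtration index drops below $0$ (the paper notes the codomain $\E^r_{p-r,q+r-1}$ vanishes, while you equivalently note $\Z^r_{p,q}\subseteq\ker\partial$ so the induced map is zero), and one case where $\X_{p,q}=\X_{p-1,q+1}=\Cfunc_{p+q}(\Xspace)$ forces $\Z^r_{p,q}=\Z^{r-1}_{p-1,q+1}$ and hence $\E^r_{p,q}=0$. The only difference is that you split on the sign of $p-r$ rather than on $p\leq m$ versus $p>m$, which is an equally valid and exhaustive partition.
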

\begin{proof}
For $p \leq m$, $\E^r_{p-r, q+r-1} = 0$ because $\X_{p-r,q+r-1} = 0$.
This makes the codomain of $d^r_{p,q}$ zero.
For $p > m$, consider
	\begin{align*}
	\Z^r_{p,q} &= \X_{p,q} \cap \partial^{-1}(\X_{p-r, q+r-1}) \\
	&= \Cfunc_{p+q}(\Xspace) \cap \partial^{-1}(\X_{p-r, q+r-1}) \\
	\Z^{r-1}_{p-1, q+1} &= \X_{p-1, q+1} \cap \partial^{-1}(\X_{p-r, q+r-1}) \\
	&= \Cfunc_{p+q}(\Xspace) \cap \partial^{-1}(\X_{p-r, q+r+1}).
	\end{align*}
This makes the domain of $d^r_{p,q}$ zero.
\end{proof}

\begin{thm}
\label{thm:conv}
Let $f : \Xspace \to \Y$ be a simplicial map, 
$\left(\E^r, d^r, \xi^r \right)_{r \geq 0}$ its Leray spectral sequence, and $m = \dim \Y$.
Then there is a canonical isomorphism
$\Hfunc_k \left(\Xspace \right) \cong \bigoplus_{p=0}^k \E^{m+1}_{p,k-p}$.
\end{thm}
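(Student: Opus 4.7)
The strategy is to identify the $(m+1)$-st page with the associated graded of a natural filtration on $\Hfunc_k(\Xspace)$, and then invoke the fact that over a field every bounded filtration of a vector space splits.

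First I would extract an explicit description of $\E^{m+1}_{p,q}$. For $0 \leq p \leq m$, the indices governing $Z^{m+1}_{p,q}$, $Z^m_{p-1,q+1}$, and $B^m_{p,q}$ all fall into extreme ranges of the filtration: $\X_{p-m-1,\cdot} = 0$ while $\X_{p+m, \cdot} = \Cfunc_{\cdot}(\Xspace)$. Feeding these into the definitions collapses them to $Z^{m+1}_{p,q} = \X_{p,q} \cap \ker\partial$, $Z^m_{p-1,q+1} = \X_{p-1,q+1} \cap \ker\partial$, and $B^m_{p,q} = \X_{p,q} \cap \image\partial$, giving
\[
\E^{m+1}_{p,q} = \frac{\X_{p,q} \cap \ker\partial}{(\X_{p-1,q+1} \cap \ker\partial) + (\X_{p,q} \cap \image\partial)}.
\]
For $p < 0$ or $p > m$ the same bookkeeping yields $\E^{m+1}_{p,q} = 0$, so only $0 \leq p \leq m$ contributes.

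Next I would introduce the filtration $F_p\Hfunc_k(\Xspace) := \image\bigl(\Hfunc_k(\Xspace_p) \to \Hfunc_k(\Xspace)\bigr)$. Because $\Xspace_{-1} = \varnothing$ and $\Xspace_m = \Xspace$, this filtration is bounded with $F_{-1} = 0$ and $F_m = \Hfunc_k(\Xspace)$. Rewriting $F_p\Hfunc_k(\Xspace) = \bigl((\X_{p,k-p} \cap \ker\partial) + \image\partial\bigr)/\image\partial$ and applying the second isomorphism theorem reduces the identification of $F_p\Hfunc_k/F_{p-1}\Hfunc_k$ with $\E^{m+1}_{p,k-p}$ to the modular identity
\[
(\X_{p,k-p} \cap \ker\partial) \cap \bigl((\X_{p-1,k-p+1} \cap \ker\partial) + \image\partial\bigr) = (\X_{p-1,k-p+1} \cap \ker\partial) + (\X_{p,k-p} \cap \image\partial),
\]
which falls out by a direct chase: any $a = a' + b$ on the left gives $b = a - a' \in \X_{p,k-p} \cap \image\partial$ since $\X_{p-1,k-p+1} \subseteq \X_{p,k-p}$.

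Finally, since the filtration of $\Hfunc_k(\Xspace)$ is a bounded filtration of a $\field$-vector space, it splits and I obtain $\Hfunc_k(\Xspace) \cong \bigoplus_{p=0}^{m} F_p\Hfunc_k/F_{p-1}\Hfunc_k \cong \bigoplus_{p=0}^{m} \E^{m+1}_{p,k-p}$. The upper limit can be truncated to $k$ because a simplicial map cannot raise dimension, so every $k$-simplex of $\Xspace$ already lies in $\Xspace_k$; consequently $\X_{p,k-p} = \X_{p-1,k-p+1}$ for $p \geq k+1$, forcing $\E^{m+1}_{p,k-p} = 0$. The main content of the argument is the modular identity of step two; everything else is routine bookkeeping with the definitions of $Z^r_{p,q}$ and $B^r_{p,q}$.
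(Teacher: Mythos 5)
Your proof is correct and follows essentially the same route as the paper: both compute that for $0 \leq p \leq m$ the term $\E^{m+1}_{p,q}$ collapses to $\bigl(\X_{p,q}\cap\ker\partial\bigr)/\bigl((\X_{p-1,q+1}\cap\ker\partial)+(\X_{p,q}\cap\image\,\partial)\bigr)$ and then recognize these as the graded pieces of the skeletal filtration on $\Hfunc_k(\Xspace)$, which splits over a field. The only difference is that you spell out the step the paper compresses into ``which is a direct summand'' --- namely the image filtration $F_p\Hfunc_k(\Xspace)$, the second isomorphism theorem, and the modular identity --- and you justify why the sum truncates at $p=k$, both of which are welcome additions rather than deviations.
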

\begin{proof}
Let $\Z_{p+q} := \ker \partial : \Cfunc_{p+q}(\Xspace) \to \Cfunc_{p+q-1}(\Xspace)$
and let $\B_{p+q} := \image \; \partial : \Cfunc_{p+q+1}(\Xspace) \to \Cfunc_{p+q}(\Xspace)$.
Then $\Hfunc_{p+q}(\Xspace) := \dfrac{\Z_{p+q}}{\B_{p+q}}$.
Let $\Z_{p,q} := \X_{p,q} \cap \Z_{p+q}$ and let $\B_{p,q} := \X_{p,q} \cap \B_{p+q}$.
Diagram \ref{eq:double_chain} induces the following diagram of inclusions:
	\begin{equation*}
	\begin{tikzcd}
	\B_{0, p+q} \ar[r, hook] \ar[d, hook] & \cdots \ar[r, hook] & \B_{p-1, q+1} \ar[r, hook] \ar[d, hook]
	& \B_{p, q} \ar[r, hook] \ar[d, hook] & \cdots \ar[r, hook] & \B_{p+q} \ar[d, hook] \\
	\Z_{0, p+q} \ar[r, hook] & \cdots \ar[r, hook] & \Z_{p-1, q+1} \ar[r, hook] & \Z_{p, q} \ar[r, hook] 
	& \cdots \ar[r, hook] & \Z_{p+q}.
	\end{tikzcd}
	\end{equation*}
We have
	\begin{align*}
	\E^{m+1}_{p,q} &= \dfrac{\Z^{m+1}_{p,q}}{\Z^{m}_{p-1, q+1} + \B^{m}_{p,q}} \\
	&= \dfrac{\X_{p,q} \cap \partial^{-1}(\X_{p-m-1, q+m})}
	{\X_{p-1,q+1} \cap \partial^{-1}(\X_{p-1-m, q+m}) + \X_{p,q} \cap \partial(\X_{p+m, q-m+1})} &&
	\text{by definition} \\
	&= \dfrac{\X_{p,q} \cap \Z_{p+q}}{\X_{p-1,q+1} \cap \Z_{p+q} + \X_{p,q} \cap \B_{p+q}} \\
	&= \dfrac{\Z_{p,q}}{\Z_{p-1,q+1} + \B_{p,q}} \\
	&= \dfrac{ \dfrac{\Z_{p,q}}{\B_{p,q}}} { \dfrac{\Z_{p-1, q+1}}{\B_{p-1,q+1}}}
	\end{align*}
which is a direct summand of $\Hfunc_{p+q}(\Xspace)$.
Therefore $\Hfunc_k \left(\Xspace \right) \cong \bigoplus_{p=0}^k \E^{m+1}_{p,k-p}$.
\end{proof}

\section{Leray Cosheaves}
Let $f : \Xspace \to \Y$ be a simplicial map.
For each point $p \in |\Y|$, the pre-image $|f^{-1}|(p)$ is a space which we encode, algebraically,
as a chain complex.
The collection of such chain complexes over all points in $|\Y|$ assembles into a
cosheaf of chain complexes over $\Y$.
Since the data of $f$ is discrete, this cosheaf is discrete: one need only define it on simplices
and face relations of $\Y$.
The \emph{$d$-th Leray cosheaf} of $f$ is the $d$-th homology functor applied pointwise
to this cosheaf of chain complexes.

\subsection{Cosheaf of Chain Complexes}

Fix a simplicial map $f : \Xspace \to \Y$.
For all (open) $\tau \in \Y$ and integers $q$, let 
\[\Xspace_q^f(\tau) := \left \{ \sigma \in \Xspace \; \middle | \; \dim \sigma = \dim \tau + q\text{ and } 
\sigma \in f^{-1}(\tau) \right \}.\]
Note that $\Xspace_q^f(\tau)$ may not be a simplicial complex.
Let $\Ffunc_q (\tau) \subseteq \Cfunc_{\dim \tau+q}(\Xspace)$ be the subspace 
generated by the set $\Xspace_q^f(\tau)$.
The boundary of an oriented simplex $\sigma =  [ u_0, \ldots, u_{\dim \tau+q}  ]$
in $\Xspace_q^f(\tau)$ is 
\[\partial \sigma := \sum_{i=0}^{ \dim \tau + q} (-1)^i
 [ u_0 , \cdots, \hat{u}_i, \cdots, u_{\dim \tau + q}  ]\]
where the hat over $u_i$ indicates its deletion.
Not every simplex in the boundary of $\sigma$ belongs to $\Xspace^f_{q-1}(\tau)$.
Nonetheless, this simplex-wise boundary operation taking each $\sigma \in \Xspace_q^f(\tau)$
to a signed up of simplices that are actually in $\Xspace_{q-1}^f(\tau)$ induces a boundary 
homomorphism 
$\partial : \Ffunc_q(\tau) \to \Ffunc_{q-1}(\tau).$
In this way, $f$ induces a chain complex $\Ffunc_{\bullet}(\tau)$ of $\field$-vector spaces
over each simplex $\tau \in \Y$.
Note that $\Ffunc_\bullet(\tau)$ is simply the restriction of the simplicial chain complex for $\Xspace$
(see Equation \ref{eq:chain_complex}) to $\Xspace_q^f(\tau)$.

Now we define a chain map $\Ffunc_\bullet(\tau' \leq \tau) : \Ffunc_\bullet(\tau) \to \Ffunc_\bullet(\tau')$
for every face relation $\tau' \leq \tau$ in $\Y$
Every simplex $\sigma \in f^{-1}(\tau)$ has a maximal face $\sigma' \leq \sigma$ such that $f(\sigma') = \tau'$.
If $\dim \sigma - \dim \tau = \dim \sigma' - \dim \tau'$, then we send
$\sigma$ to $\sigma'$ with the appropriate sign.
Otherwise, we send $\sigma$ to $0$.
We now work on the appropriate sign for the first case.
Orient $\sigma = [u_0, \cdots, u_{\dim \tau + q}]$ as follows. 
Call a vertex $u_i$ of $\sigma$ \emph{alone} if $u_i$ is the only vertex of $\sigma$ that maps to $f(u_i)$.
Choose the orientation on $\sigma$ so that if $u_i$ is alone, then $u_j$ is also alone for 
all $j \geq i$.
That is, push all the alone vertices of $\sigma$ to the end of the ordering.
Since we are assuming $\dim \sigma - \dim \tau = \dim \sigma' - \dim \tau'$,
there are exactly $\dim \tau - \dim \tau'$ vertices of $\sigma$ that are removed to get $\sigma'$ and
furthermore all these vertices are alone.
Let $\Ffunc_q(\tau' \leq \tau)$ be the map generated by sending $[u_0, \cdots, u_{\dim \tau + q}]$
to the oriented simplex $\sigma'$ obtained by deleting the appropriate vertices.
We now argue commutativity of the following diagram:
	\begin{equation*}
	\begin{tikzcd}
	\cdots \arrow[r, "\partial"]  & 
	\Ffunc_{q+1}(\tau) \arrow[r, "\partial"] \arrow[d, "\Ffunc_{q+1}(\tau' \leq \tau)"] & 
	\Ffunc_{q}(\tau) \arrow[r, "\partial"] \arrow[d, "\Ffunc_q(\tau' \leq \tau)"] &
	\Ffunc_{q-1}(\tau) \arrow[r, "\partial"] \arrow[d, "\Ffunc_{q-1}(\tau' \leq \tau)"] & 
	\cdots \\
	\cdots \arrow[r, "\partial"] & \Ffunc_{q+1}(\tau') \arrow[r, "\partial"] & 
	\Ffunc_{q}(\tau') \arrow[r, "\partial"] & \Ffunc_{q-1}(\tau') \arrow[r, "\partial"] & \cdots.
	\end{tikzcd}	
	\end{equation*}
Suppose $(-1)^i[u_0, \cdots, \hat{u}_i, \cdots , u_{\dim \tau + q}]$ is a term in the signed sum
$\partial [u_0, \cdots u_{\dim \tau + q}]$.
Assuming this simplex belongs to $f^{-1}(\tau)$, $u_i$ cannot be alone.
Note that all the alone vertices of $[u_0, \cdots, \hat{u}_i, \cdots , u_{\dim \tau + q}]$ are at
the end of the ordering.
Then $\Ffunc_{q-1}(\tau' \leq \tau)$ applied to 
$[u_0, \cdots, \hat{u}_i, \cdots , u_{\dim \tau + q}]$
is obtained by deleting the appropriate alone vertices.
Now consider the other direction.
The map $\Ffunc_q(\tau' \leq \tau)$ takes $[u_0, \cdots , u_{\dim \tau + q}]$ to the oriented
simplex obtained by deleting the appropriate alone vertices.
Then $\partial$ applied to the resulting oriented simplex is obtained by deleting
the $i$-th vertex with a sign of $(-1)^i$.
Note that this index $i$ is the same as the $i$ in $[u_0, \cdots, {u}_i, \cdots , u_{\dim \tau + q}]$.
The diagram commutes.
As an immediate consequence, for every triple $\tau'' \leq \tau' \leq \tau$, 
$\Ffunc_\bullet(\tau'' \leq \tau) = \Ffunc_\bullet(\tau'' \leq \tau') \circ \Ffunc_\bullet(\tau' \leq \tau)$.

\begin{defn}
The \define{entrance path category} $\Ent(\Y)$ of the simplicial complex $\Y$ is the poset with an object
for each simplex $\tau \in \Y$ and a morphism $\tau \to \tau'$ for every face relation $\tau' \leq \tau$.
\end{defn}

Given $f : \Xspace \to \Y$, the assignment to each $\tau \in \Y$ the chain complex
$\Ffunc_\bullet(\tau)$ and to each $\tau' \leq \tau$ the chain map $\Ffunc_\bullet(\tau' \leq \tau)$
is a functor
$\Ffunc_\bullet : \Ent(\Y) \to \mathsf{Ch}_\bullet (\mathsf{Vec})$
where $\mathsf{Ch}_\bullet (\mathsf{Vec})$ is the category of chain complexes of $\field$-vector spaces.

\begin{defn}
A \define{cosheaf of vector spaces} over a simplicial complex $\Y$ is a functor 
$\Lfunc : \Ent(\Y) \to \Vect$.
The \define{$q$-th Leray cosheaf} of a simpicial map $f : \Xspace \to \Y$ is the functor
$\Lfunc_q^f := \Hfunc_q \circ \Ffunc_\bullet : \Ent(\Y) \to \Vect$
where $\Hfunc_q : \mathsf{Ch}_\bullet \to \Vect$ is the $q$-th homology functor.
\end{defn}

\subsection{Cosheaf homology}
Fix a cosheaf $\Lfunc : \Ent(\Y) \to \Vect$ of vector spaces over $\Y$.
In this section, we define the homology of $\Y$ with coefficients in $\Lfunc$ also
known as the cosheaf homology of $\Lfunc$.

For an integer $p$, let 
$\Cfunc_p \big( \Y; \Lfunc \big) := 
\bigoplus_{ \tau \in \Y^p \setminus \Y^{p-1} } \Lfunc(\tau)$.
The boundary map $\partial : \Cfunc_p(\Y; \Lfunc) \to \Cfunc_{p-1}(\Y; \Lfunc)$ is generated
by the following assignment.
For an oriented $p$-simplex $\tau = [v_0, \ldots, v_{p}]$ in $\Y$ and $a \in \Lfunc(\tau)$,
let 
\[\partial (a) := \sum_{i=0}^{p} (-1)^i 
\Lfunc \big( [ v_0, \ldots, \hat{v}_i, \ldots, v_{p} ] \leq \tau \big)(a) \in 
\Cfunc_{k-1}\big(\Y; \Lfunc(\tau) \big).\]
We have $\partial \circ \partial (a) = 0$ because $\partial \circ \partial (\tau) = 0$
and $\Lfunc$ is a functor (i.e.\ everything commutes).
Thus we have a chain complex $\Cfunc_\bullet(\Y; \Lfunc)$:
	\begin{equation*}
	\begin{tikzcd}
	\cdots \arrow[r] & \Cfunc_{p+1} \big( \Y; \Lfunc \big) \arrow[r, "\partial"] & 
	\Cfunc_{p} \big( \Y; \Lfunc \big) \arrow[r, "\partial"] &
	\Cfunc_{p-1} \big( \Y; \Lfunc \big) \arrow[r, "\partial"] & \cdots.
	\end{tikzcd}
	\end{equation*}

\begin{defn}
The \define{$p$-th homology of $\Y$ with coefficients in a cosheaf $\Lfunc$} over $\Y$ is
$\Hfunc_p \big(\Y; \Lfunc \big) := \Hfunc_p \big( \Cfunc_\bullet (\Y; \Lfunc) \big)$.
\end{defn}

\begin{prop}
\label{prop:cosheaf_homology}
Let $f : \Xspace \to \Y$ be a simplicial map, $(\E^r, d^r, \xi^r)_{r \geq 0}$ its
Leray spectral sequence, and $\Lfunc_q^f$ its $q$-th Leray cosheaf. 
Then there is a canonical isomorphism
$\Hfunc_{p}(\Y; \Lfunc^f_q) \cong \E^2_{p,q}$.
\end{prop}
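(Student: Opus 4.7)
The plan is to identify $\E^0$ with a direct sum of fiber chain complexes of $f$, so that $\E^1$ becomes the cosheaf chain complex $\Cfunc_\bullet(\Y; \Lfunc^f_q)$, and then to verify that $d^1$ coincides with the cosheaf boundary (up to an inessential global sign). First I would unpack $\E^0_{p,q}$ directly from the definitions. Because the ambient $\partial$ sends a $(p+q)$-chain in $\Xspace_p$ to a $(p+q-1)$-chain in $\Xspace_p$, one checks $\Z^0_{p,q} = \X_{p,q}$, $\Z^{-1}_{p-1,q+1} = \X_{p-1,q+1}$, and $\B^{-1}_{p,q} = \partial(\X_{p-1,q+2}) \subseteq \X_{p-1,q+1}$. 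Hence $\E^0_{p,q} = \X_{p,q}/\X_{p-1,q+1}$, whose basis is the set of $(p+q)$-simplices $\sigma \in \Xspace$ with $\dim f(\sigma) = p$. Grouping these by $\tau := f(\sigma)$ yields a canonical identification
\[
\E^0_{p,q} \;\cong\; \bigoplus_{\tau \in \Y^p \setminus \Y^{p-1}} \Ffunc_q(\tau).
\]

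Next I would analyze $d^0_{p,q} : \E^0_{p,q} \to \E^0_{p,q-1}$, which is induced by $\partial$. For a basis element $\sigma \in \Xspace^f_q(\tau)$, every codimension-one face $\sigma' \subset \sigma$ satisfies $f(\sigma') \leq \tau$; if $f(\sigma') = \tau$ then $\sigma' \in \Xspace^f_{q-1}(\tau)$, while otherwise $\sigma' \in \X_{p-1,q}$ and is killed in $\E^0_{p,q-1}$. Thus $d^0$ respects the direct sum decomposition above and restricts on each summand to the fiber boundary $\partial : \Ffunc_q(\tau) \to \Ffunc_{q-1}(\tau)$. Passing to homology in $q$ and using $\Lfunc^f_q(\tau) := \Hfunc_q(\Ffunc_\bullet(\tau))$ gives the canonical isomorphism
\[
\E^1_{p,q} \;\cong\; \bigoplus_{\tau \in \Y^p \setminus \Y^{p-1}} \Lfunc^f_q(\tau) \;=\; \Cfunc_p(\Y; \Lfunc^f_q).
\]

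Finally I would show $d^1_{p,q}$ matches the cosheaf boundary. A class in $\E^1_{p,q}$ corresponds to a collection $\{[z_\tau]\}_\tau$ of fiber-homology classes, represented by fiber-cycles $z_\tau \in \Ffunc_q(\tau)$; the lift $z := \sum_\tau z_\tau$ actually lies in $\Z^1_{p,q}$, because the fiber-cycle condition cancels all components of $\partial z$ over $\tau$ itself and so forces $\partial z \in \X_{p-1,q}$. The surviving codimension-one faces of $z$ are precisely those obtained by deleting a vertex of some $\sigma \in \Xspace^f_q(\tau)$ that is the unique preimage of a vertex of $\tau$, i.e.\ an alone vertex. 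Thus in $\E^1_{p-1,q}$ the class $[\partial z]$ decomposes, over codimension-one faces $\tau' < \tau$, as a sum of terms of the form $\Ffunc_q(\tau' \leq \tau)(z_\tau)$ weighted by the signs of the ambient $\partial$. The main obstacle is checking that these ambient signs agree with the $(-1)^i$ signs of the cosheaf boundary on $\tau$: this is exactly what the "alone vertices pushed to the end" orientation convention used to define $\Ffunc_q(\tau' \leq \tau)$ is engineered for, and a direct bookkeeping shows that the two sign systems agree up to the overall factor $(-1)^q$, depending only on the fiber degree and therefore irrelevant to the computation of kernels, images, and homology. Consequently $d^1$ and the cosheaf differential have the same homology, so
\[
\E^2_{p,q} \;=\; \Hfunc(d^1)_{p,q} \;\cong\; \Hfunc_p(\Y; \Lfunc^f_q).
\]
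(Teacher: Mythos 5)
Your identification of $\E^0_{p,q}$ with $\X_{p,q}/\X_{p-1,q+1} \cong \bigoplus_{\tau \in \Y^p\setminus\Y^{p-1}}\Ffunc_q(\tau)$, your analysis of $d^0$ as the fiber boundary, and your description of $d^1$ via lifting a tuple of fiber-cycles to $\Z^1_{p,q}$ all match the paper's argument and are correct. The gap is in the last step, where you assert that under the ``alone vertices pushed to the end'' convention the ambient signs and the cosheaf signs ``agree up to the overall factor $(-1)^q$.'' They do not, and the discrepancy is not even constant across the terms in the boundary of a single simplex. Take $\tau = [v_0,v_1,v_2]$ and a $3$-simplex $\sigma$ (so $q=1$) with two vertices mapping to $v_1$ and alone vertices $a \mapsto v_0$ and $b \mapsto v_2$; the alone-at-end ordering is $[w_0,w_1,a,b]$. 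The ambient $\partial$ deletes $a$ at position $2$ with sign $+1$ and $b$ at position $3$ with sign $-1$, whereas the cosheaf boundary assigns $(-1)^0=+1$ to the face obtained by deleting $v_0$ (i.e.\ deleting $a$) and $(-1)^2=+1$ to the face obtained by deleting $v_2$ (i.e.\ deleting $b$). The per-term ratio is $(-1)^{i-j}$, where $i$ is the position of the alone vertex in $\sigma$ and $j$ the index of its image in $\tau$, and here it is $+1$ for one term and $-1$ for the other. So no global sign, and in particular not $(-1)^q$, reconciles the two differentials, and your closing inference that ``$d^1$ and the cosheaf differential have the same homology'' is unsupported as written.

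The fix is not to keep the alone-at-end ordering and absorb a sign, but to re-orient each $\sigma \in f^{-1}(\tau)$ so that every alone vertex $u_i$ sits at the position $i$ equal to the index of $f(u_i)$ in $\tau = [v_0,\ldots,v_p]$ (this is always possible, since the alone vertices map to distinct vertices of $\tau$ and the non-alone vertices can fill the remaining slots). With that choice the ambient sign $(-1)^i$ and the cosheaf sign $(-1)^j$ coincide term by term, the identification $\nu_p : \E^1_{p,q} \to \Cfunc_p(\Y;\Lfunc^f_q)$ becomes an honest chain isomorphism, and the conclusion $\E^2_{p,q} \cong \Hfunc_p(\Y;\Lfunc^f_q)$ follows. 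This is exactly the device the paper uses; the rest of your argument can stand unchanged once this orientation convention replaces your sign claim.
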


\begin{proof}
Consider
	\begin{align*}
	\E^0_{p,q} &= \dfrac{\Z^0_{p,q}}{\Z^{-1}_{p-1, q+1} + \B^{-1}_{p,q}} \\
	&= \dfrac{\X_{p,q} \cap \partial^{-1}(\X_{p,q-1})}{\big(\X_{p-1, q+1} \cap \partial^{-1}(\X_{p,q-1})\big) +
	\big( \X_{p,q} \cap \partial(\X_{p-1,q+2}) \big)} && \text{by definition}  \\
	&= \dfrac{\X_{p,q}}{\X_{p-1,q+1} + \big( \X_{p,q} \cap \partial(\X_{p-1,q+2}) \big)} && \\
	&= \dfrac{\X_{p,q}}{\X_{p-1,q+1}} && \text{by containment}.
	\end{align*}
In other words, $\E^0_{p,q}$ is isomorphic to the $k$-vector space generated by the set 
of $(p+q)$-simplices in $f^{-1}(\Y^p \setminus \Y^{p-1})$.
Consider the following diagram:
	\begin{equation*}
	\begin{tikzcd}
	\cdots \ar[r] & \E^0_{p,q+1} \ar[d, "\mu_{q+1}"] \ar[r, "d^0_{p,q+1}"] & \E^0_{p, q} \ar[d, "\mu_{q}"] 
	\ar[r, "d^0_{p,q}"] & \E^0_{p,q-1} \ar[r] \ar[d, "\mu_{q-1}"] & \cdots \\
	\cdots \ar[r] & \bigoplus_{ \tau \in \Y^p \setminus \Y^{p-1}} \Ffunc_{q+1}(\tau) \ar[r, "\partial"] & 
	\bigoplus_{ \tau \in \Y^p \setminus \Y^{p-1}} \Ffunc_{q}(\tau) \ar[r, "\partial"] & 
	\bigoplus_{ \tau \in \Y^p \setminus \Y^{p-1}} \Ffunc_{q-1}(\tau) 
	\ar[r] & \cdots .
	\end{tikzcd}
	\end{equation*}
Each $(p+q)$-simplex $\sigma \in f^{-1}(\Y^p \setminus \Y^{p-1})$ belongs to $\Xspace^f_q(\tau)$
where $\tau = f(\sigma)$.
Let $\mu_q$ be the map generated by sending the oriented simplex $\sigma$ to itself.
The boundary maps $d^0_{p,q}$ and $\partial$ are defined in exactly the same way.
This makes $\mu_\bullet$ a chain complex isomorphism.
As a result, $\E^1_{p,q}$ is canonically isomorphic to $\Cfunc_p(\Y; \Lfunc^f_q)$.

Consider the following diagram where each $\nu_\ast$ is the isomorphism just constructed:
	\begin{equation*}
	\begin{tikzcd}
	\cdots \ar[r] & \E^1_{p+1,q} \ar[d, "\nu_{p+1}"] \ar[r, "d^1_{p+1,q}"] & \E^1_{p, q} \ar[d, "\nu_{p}"] 
	\ar[r, "d^1_{p,q}"] & \E^1_{p-1,q} \ar[r] \ar[d, "\nu_{p-1}"] & \cdots \\
	\cdots \ar[r] & \Cfunc_{p+1}(\Y; \Lfunc^f_q) \ar[r, "\partial"] & 
	\Cfunc_p(\Y; \Lfunc^f_q) \ar[r, "\partial"] & 
	\Cfunc_{p-1}(\Y; \Lfunc^f_q) 
	\ar[r] & \cdots .
	\end{tikzcd}
	\end{equation*}
Let $\tau \in \Y$ be a $p$-simplex and $\sigma \in f^{-1}(\tau)$ a $(p+q)$-simplex.
Let $\tau$ be the oriented simplex $[v_0, \cdots, v_p]$ and choose an orientation $[u_0, \cdots, u_{p+q}]$
on $\sigma$ so that if $u_i$ is alone, then $i$ is equal to the index of the vertex $f(u_i)$ of $\tau$.
The map $d^1_{p,q}$ is generated by sending the oriented simplex $\sigma$ to a signed sum of 
codimension-$1$ faces each
belonging to $f^{-1}(\tau')$ for some codimension-$1$ face $\tau' \leq \tau$.
If $[u_0, \cdots, \hat{u}_i, \cdots, u_{p+q}]$ is such a codimension-$1$ face, then its sign is $(-1)^i$.
Recall $\nu_\bullet$ is generated by sending each oriented simplex to itself.
The map $\partial$ is generated by sending the oriented simplex $\sigma$ to a signed sum of codimension-$1$ faces
each belonging to $f^{-1}(\tau')$ for some codimension-$1$ face $\tau' \leq \tau$.
By definition of cosheaf homology, the sign here is $(-1)^j$ where $j$ is the index of the vertex in 
$[v_0, \cdots, v_p]$ that is deleted to get $\tau'$.
If $[u_0, \cdots, \hat{u}_i, \cdots, u_{p+q}]$ is such a codimension-$1$ face of $\sigma$ over $\tau'$, 
then its sign is also $(-1)^i$ because the index of $f(u_i)$ is equal to $i$.
This makes $\nu_\bullet$ a chain complex isomorphism.
As a result, $\E^2_{p,q}$ is canonically isomorphic to $\Hfunc_p(\Y; \Lfunc^f_q)$.

\end{proof}

%
%

\section{Applications}
We now relate the Leray spectral sequence and the Leray cosheaves 
of a simplicial map
to level set persistent homology and Reeb spaces.

\subsection{Level Set Persistence}
We now consider simplicial maps $f : \Xspace \to \Y$ where $\Y$ is a 
triangulation of the real line $\Rspace$.
In this case, the $q$-th Leray cosheaf $\Lfunc^f_q$ of $f$ is the equivalent 
to the $q$-th ``level set persistence module'' of $f$.
In this special case, the Leray cosheaves of $f$ encode the homology of $\Xspace$.
The following claim is an immediate consequence of Theorem \ref{thm:conv}.

\begin{cor}
\label{cor:levelset}
Let $\Y$ be a triangulation of the real line and $f : \Xspace \to \Y$ a simplicial map.
Then $\Hfunc_{k}(\Xspace) \cong \bigoplus_{p+q = k} \Hfunc_{p}(\Y; \Lfunc_q^f)$.
\end{cor}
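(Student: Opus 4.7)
The plan is to combine three ingredients already established in the paper: Proposition \ref{prop:convergence} on convergence, Theorem \ref{thm:conv} on the limit page, and Proposition \ref{prop:cosheaf_homology} identifying the second page with cosheaf homology. The key observation is that since $\Y$ triangulates $\Rspace$, we have $m := \dim \Y = 1$, so the Leray spectral sequence of $f$ collapses very early.

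First, I would invoke Proposition \ref{prop:convergence} with $m = 1$ to conclude that the Leray spectral sequence associated to $f$ converges on page $r = m+1 = 2$. Concretely, for $r \geq 2$ the differential $d^r_{p,q} : \E^r_{p,q} \to \E^r_{p-r, q+r-1}$ either has zero domain (when $p > 1$, the chains $\Z^r_{p,q}$ and $\Z^{r-1}_{p-1,q+1}$ agree since both preimages land below the dimension of $\Y$) or zero codomain (when $p \leq 1$, the shift by $-r \leq -2$ forces $\X_{p-r, q+r-1} = 0$). Hence $\E^r_{p,q} \cong \E^2_{p,q}$ for all $r \geq 2$.

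Next, I would apply Theorem \ref{thm:conv} with $m = 1$ to get the canonical isomorphism
\[
\Hfunc_k(\Xspace) \cong \bigoplus_{p=0}^k \E^{m+1}_{p, k-p} = \bigoplus_{p=0}^k \E^2_{p, k-p}.
\]
Finally, Proposition \ref{prop:cosheaf_homology} identifies each summand $\E^2_{p, k-p}$ canonically with $\Hfunc_p(\Y; \Lfunc^f_{k-p})$. Reindexing the direct sum by $p+q = k$ yields the claimed isomorphism $\Hfunc_k(\Xspace) \cong \bigoplus_{p+q=k} \Hfunc_p(\Y; \Lfunc^f_q)$.

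There is no real obstacle here since the hard work has been done upstream; the only thing to double-check is that the index range in Theorem \ref{thm:conv} ($0 \leq p \leq k$) matches the range appearing in the direct sum of the corollary (pairs with $p+q = k$), and that terms with $p < 0$ or $p > 1$ vanish automatically on page two so they may be silently included or excluded. Since $\Lfunc^f_q$ is supported on $\Y$ which has only $0$- and $1$-simplices, $\Hfunc_p(\Y; \Lfunc^f_q) = 0$ for $p \notin \{0,1\}$, and $\E^2_{p, k-p} = 0$ for $p < 0$ or $p > 1$, so both sides genuinely reduce to the $p = 0$ and $p = 1$ contributions.
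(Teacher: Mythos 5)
Your proposal is correct and matches the paper's intent exactly: the paper presents this corollary as an immediate consequence of Theorem \ref{thm:conv} (with $m=\dim\Y=1$ so the $\E^{m+1}$ page is the $\E^2$ page) combined with the identification $\E^2_{p,q}\cong\Hfunc_p(\Y;\Lfunc^f_q)$ from Proposition \ref{prop:cosheaf_homology}. Your extra checks on index ranges and vanishing for $p\notin\{0,1\}$ are sound and only make explicit what the paper leaves implicit.
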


Those who have studied level set persistent homology know that the homology of $\Xspace$ 
can be read from the ``barcode'' of the Leray cosheaves of $f$.
Assume $\Xspace$ is a finite simplicial complex.
Then each Leray cosheaf $\Lfunc_q^f$ is a finite direct sum of indecomposable cosheaves.
Each indecomposable cosheaf or ``bar'' is one of the following four types where $a \leq b$ are any two vertices of $\Y$:
\begin{itemize}
\item Let $\Bfunc_{[a,b]} : \Ent(\Y) \to \Vect$ be the cosheaf that assigns $\field$ to 
all vertices and edges within the closed interval $[a,b]$ and the identity map to all morphisms
between them. The cosheaf is $0$ elsewhere.
\item Let $\Bfunc_{[a,b)} : \Ent(\Y) \to \Vect$ be the cosheaf that assigns $\field$ to 
all vertices and edges within the half-closed interval $[a,b)$ and the identity map to all morphisms
between them. The cosheaf is $0$ elsewhere.
\item Let $\Bfunc_{(a,b)} : \Ent(\Y) \to \Vect$ be the cosheaf that assigns $\field$ to 
all vertices and edges within the open interval $(a,b)$ and the identity map to all morphisms
between them. The cosheaf is $0$ elsewhere.
\item Let $\Bfunc_{(a,b]} : \Ent(\Y) \to \Vect$ be the cosheaf that assigns $\field$ to 
all vertices and edges within the half-closed interval $(a,b]$ and the identity map to all morphisms
between them. The cosheaf is $0$ elsewhere.
\end{itemize}
All but two of these indecomposable cosheaves have non-zero cosheaf homology:
\begin{equation*}
\begin{aligned}
\Hfunc_0(\Y; \Bfunc_{[a,b]}) \cong \field && \Hfunc_0(\Y; \Bfunc_{[a,b)}) \cong 0 && \Hfunc_0(\Y; \Bfunc_{(a,b)}) \cong 0 && 
\Hfunc_0(\Y; \Bfunc_{(a,b]}) \cong 0 \\ 
\Hfunc_1(\Y; \Bfunc_{[a,b]}) \cong 0 && \Hfunc_1(\Y; \Bfunc_{[a,b)}) \cong 0 && \Hfunc_1(\Y; \Bfunc_{(a,b)}) \cong \field && \Hfunc_1(\Y; \Bfunc_{(a,b]}) \cong 0.
\end{aligned}
\end{equation*}
This means that only two types of bars in the barcode for $\Lfunc_q^f$ contribute to the homology of $\Xspace$.

\subsection{Reeb Spaces}

Let $f : \Xspace \to \Y$ be a simplicial map.
Declare two simplicies $\sigma, \sigma' \in \Xspace$ as \emph{equivalent}, denoted $\sigma \sim \sigma'$,
if $f(\sigma) = f(\sigma')= \tau$ and both belong to the same component of $f^{-1}(\tau)$.
The quotient $\Xspace^f := \Xspace/ \sim$ is a simplicial complex and $f$ induces a simplicial map
$\tilde f : \Xspace^f \to \Y$ where $\tilde f (\tilde \sigma) := f(\sigma)$ for any $\sigma$ in the equivalence class
represented by $\tilde \sigma$.
The following diagram of simplicial maps commutes:
	\begin{equation*}
	\begin{tikzcd}
	\Xspace \ar[rd, "f"] \ar[rr, twoheadrightarrow] && \Xspace^f \ar[ld, "\tilde f"] \\
	& \Y. &
	\end{tikzcd}
	\end{equation*}
The  map $\tilde f$ is the \define{Reeb space} of $f$.
We can relate the homology of $\Xspace^f$ to the homology of $\Xspace$ by 
studying the Leray cosheaves of $f$ and $\tilde f$.
For any $\tau \in \Y$, there is a one-to-one correspondence between the set of connected components
of $f^{-1}(\sigma)$ and the set of connected components of $\tilde f^{-1}(\sigma)$.
This means that the two Leray cosheaves $\Lfunc^{\tilde f}_0$ and $\Lfunc^f_0$ are canonically isomorphic.

\begin{cor}
\label{cor:Reeb}
Let $f : \Xspace \to \Y$ be a simplicial map and $\tilde f: \Xspace^f \to \Y$ its Reeb space.
Then there is a canonical isomorphism $\Hfunc_0(\Xspace^f) \cong \Hfunc_0(\Xspace)$
and a canonical inclusion $\Hfunc_1(\Xspace^f) \hookrightarrow \Hfunc_1(\Xspace)$.
\end{cor}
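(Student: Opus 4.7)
The plan is to exploit the defining property of the Reeb quotient: because $\sim$ collapses each connected component of every fiber $f^{-1}(\tau)$ to a single equivalence class, each $\tilde f^{-1}(\tau)$ consists of a discrete set of classes, so $\Ffunc^{\tilde f}_\bullet(\tau)$ is concentrated in degree zero and consequently $\Lfunc^{\tilde f}_q = 0$ for all $q \geq 1$. The zeroth Leray cosheaves of $f$ and $\tilde f$ are canonically identified by the observation already recorded in the excerpt.

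By Proposition \ref{prop:cosheaf_homology}, the $E^2$ page of the Leray spectral sequence of $\tilde f$ is concentrated on the row $q = 0$. Each differential $d^r$ with $r \geq 2$ shifts $q$ upward by $r - 1 \geq 1$, so either its source or its target lies off that row and vanishes; hence the sequence degenerates at $E^2$. Theorem \ref{thm:conv} then gives $\Hfunc_k(\Xspace^f) \cong \E^2_{k, 0} \cong \Hfunc_k(\Y; \Lfunc^f_0)$ for every $k$.

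For the Leray spectral sequence of $f$ itself I restrict attention to the corners $(0, 0)$ and $(1, 0)$. From $\E^2$ onward every term is supported in the first quadrant, because $\Hfunc_p(\Y; \Lfunc^f_q)$ vanishes whenever $p < 0$ or $q < 0$ (the latter uses that a simplicial map cannot strictly decrease dimension, making $\Lfunc^f_q = 0$ for $q < 0$). For $r \geq 2$, the differentials into or out of $(0, 0)$ involve the positions $(-r, r - 1)$ and $(r, -r + 1)$, while those at $(1, 0)$ involve $(1 - r, r - 1)$ and $(1 + r, -r + 1)$, all outside the first quadrant. Therefore $\E^r_{0, 0} \cong \E^2_{0, 0}$ and $\E^r_{1, 0} \cong \E^2_{1, 0}$ for every $r \geq 2$, in particular for $r = m + 1$. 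Applying Theorem \ref{thm:conv} at $k = 0$ yields $\Hfunc_0(\Xspace) \cong \E^{m+1}_{0, 0} \cong \Hfunc_0(\Y; \Lfunc^f_0) \cong \Hfunc_0(\Xspace^f)$, while at $k = 1$ it produces the direct-summand decomposition $\Hfunc_1(\Xspace) \cong \E^{m+1}_{0, 1} \oplus \E^{m+1}_{1, 0}$ whose second summand is canonically identified with $\Hfunc_1(\Y; \Lfunc^f_0) \cong \Hfunc_1(\Xspace^f)$, giving the inclusion.

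The care point I anticipate is checking canonicity: one must verify that the direct-summand decomposition coming from Theorem \ref{thm:conv} together with the cosheaf identifications from Proposition \ref{prop:cosheaf_homology} are natural enough that the inclusion $\Hfunc_1(\Xspace^f) \hookrightarrow \Hfunc_1(\Xspace)$ is a section of the map induced by the Reeb quotient $\Xspace \twoheadrightarrow \Xspace^f$. Because the filtration $\Xspace_p$ transports functorially through the Reeb quotient to the analogous filtration on $\Xspace^f$, this naturality should follow from a direct diagram chase rather than pose a substantive obstacle.
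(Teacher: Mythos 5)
Your argument is correct and follows essentially the same route as the paper (Theorem \ref{thm:conv}, Proposition \ref{prop:cosheaf_homology}, and the canonical identification $\Lfunc^{f}_0 \cong \Lfunc^{\tilde f}_0$), and is in fact a bit more careful: you make explicit that $\Lfunc^{\tilde f}_q = 0$ for $q \geq 1$, which is exactly what kills the summand $\D^{m+1}_{0,1}$ of $\Hfunc_1(\Xspace^f)$ and is left implicit in the paper, and by checking incoming as well as outgoing differentials at $(0,0)$ and $(1,0)$ you avoid the paper's unnecessary (and, as justified there, unwarranted) claim that $\E^{m+1}_{0,1} \cong \E^{2}_{0,1}$, a position that receives a possibly nonzero $d^2$ from $(2,0)$. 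The one slip is the phrase ``a simplicial map cannot strictly decrease dimension'': what you actually need is that it cannot \emph{increase} dimension, i.e.\ $\dim f(\sigma) \leq \dim\sigma$, which is what forces $\Lfunc^{f}_q = 0$ for $q < 0$.
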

\begin{proof}
Suppose $m = \dim \Y$.
Let $(\E^r, d^r, \xi^r)_{r \geq 0}$ be the Leray spectral sequence for $f$.
Let $(\D^r, e^r, \eta^r)_{r \geq 0}$ be the Leray spectral sequence for $\tilde f$.
By Theorem \ref{thm:conv}, $\Hfunc_0(\Xspace)$ is canonically isomorphic to $\E^{m+1}_{0,0}$
and $\Hfunc_1(\Xspace)$ is canonically isomorphic to $\E^{m+1}_{0,1} \oplus \E^{m+1}_{1,0}$.
Also by Theorem \ref{thm:conv}, $\Hfunc_0(\Xspace^f)$ is canonically isomorphic to
$\D^{m+1}_{0,0}$ and $\Hfunc_1(\Xspace^f)$ is canonically isomorphic to 
$\D^{m+1}_{0,1} \oplus \D^{m+1}_{1,0}$.

By Proposition \ref{prop:cosheaf_homology}, $\E^2_{0,0}$ is canonically isomorphic
to $\Hfunc_0(\Y; \Lfunc^f_0)$ and $\D^2_{0,0}$ is canonically isomorphic to
$\Hfunc_0(\Y; \Lfunc^{\tilde f}_0)$.
Since $\Lfunc^{\tilde f}_0$ and $\Lfunc^f_0$ are canonically isomorphic, 
$\Hfunc_0(\Y; \Lfunc^f_0)$ is canonically isomorphic to $\Hfunc_0(\Y; \Lfunc^{\tilde f}_0)$.
We have our first claim.

The boundary maps $d^r_{0,1} : \E^r_{0,1} \to \E^r_{-r, r}$
and $d^r_{1,0} : \E^r_{1,0} \to \E^r_{1-r, r-1}$, for all $r \geq 2$,
are $0$ because both $\E^r_{-r, r}$ and $\E^r_{1-r, r-1}$ are zero.
As a result, $\E^{m+1}_{0,1} \oplus \E^{m+1}_{1,0}$ is canonically isomorphic
to $\E^2_{0,1} \oplus \E^2_{1,0}$.
Similarly, the boundary map $a^r : \D^r_{1,0} \to \D^r_{1-r, r-1}$, for all $r \geq 2$,
is zero because $\D^r_{1-r, r-1}$ is zero.
As a result, $\D^{m+1}_{1}$ is canonically isomorphic to $\D^{2}_{1,0}$.
By Proposition \ref{prop:cosheaf_homology}, 
$\E^2_{0,1} \oplus \E^2_{1,0}$ is canonically isomorphic to
$\Hfunc_0(\Y; \Lfunc^f_1) \oplus \Hfunc_1(\Y; \Lfunc^f_0)$ and
$\D^2_{1,0}$ is canonically isomorphic to $\Hfunc_1(\Y; \Lfunc^{\tilde f}_0)$.
Since $\Lfunc^{\tilde f}_0$ and $\Lfunc^f_0$ are isomorphic, 
$\Hfunc_1(\Y; \Lfunc^f_0)$ is isomorphic to $\Hfunc_1(\Y; \Lfunc^{\tilde f}_0)$.
We have our second claim.
\end{proof}

The higher homology groups of $\Xspace^f$ do not generally include into the higher
homology groups of $\Xspace$.
In fact, $\Hfunc_d(\Xspace^f)$ can be arbitrarily bigger than $\Hfunc_d(\Xspace)$ for $d \geq 2$;
see \cite{basu2018reeb}.
Consider the following example.
Let $\Xspace$ and $\Y$ be the simplicial complexes illustrated in Figure \ref{fig:ex}.
Let $f : \Xspace \to \Y$ be the simplicial map that takes $u_0 \mapsto v_0$, $u_1 \mapsto v_1$,
$u_2 \mapsto v_2$, and the rest to $v_3$.
The pre-image of every simplex in $\Y$ is non-empty and connected.
This makes $\Xspace^f$ homeomorphic to the $2$-sphere.
As a result, $\Hfunc_2(\Xspace^f) \cong \field$ but $\Hfunc_2(\Xspace) = 0$.

	\begin{figure}
	\centering
	\includegraphics{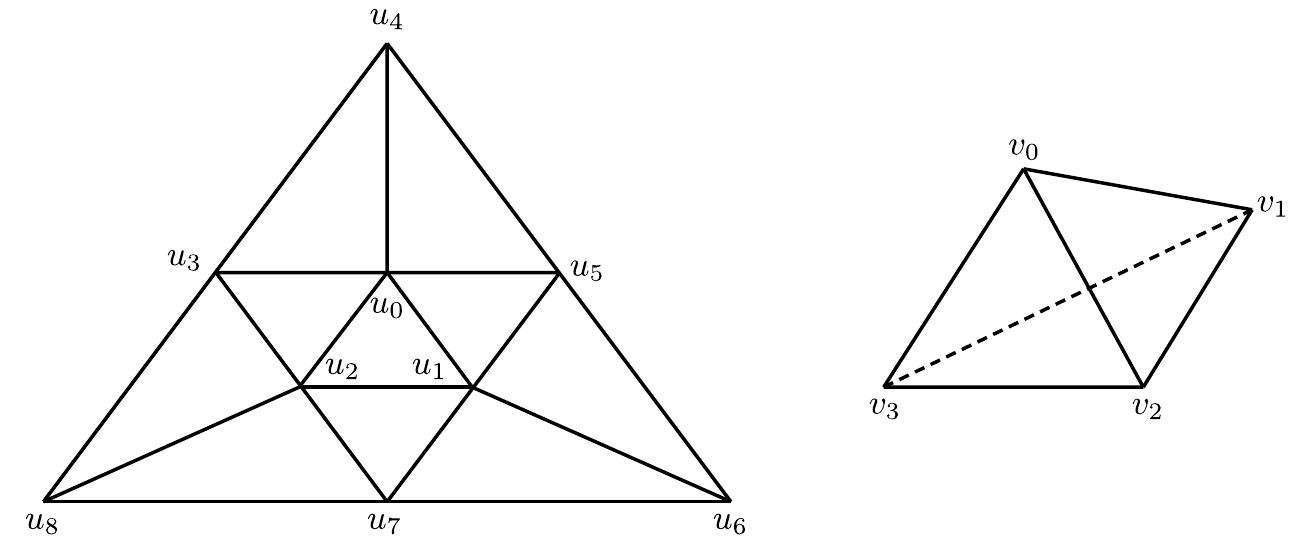}
	\caption{Let $\Xspace$ be the simplicial complex on the left and $\Y$ be the triangulation of the 
	$2$-sphere on the right.}
	\label{fig:ex}
	\end{figure}

\bibliography{sample}

\end{document}